\newcommand{\htwo}{{ {\mathcal H}_2} }
\newcommand{\boldA}{\ensuremath{\boldsymbol A}}
\newcommand{\boldAr}{\ensuremath{\boldsymbol{A}_r}}
\newcommand{\boldb}{\ensuremath{\boldsymbol b}}
\newcommand{\boldbr}{\ensuremath{\boldsymbol {b}_r}}
\newcommand{\boldB}{\ensuremath{\boldsymbol B}}
\newcommand{\boldY}{\ensuremath{\boldsymbol Y}}
\newcommand{\boldc}{\ensuremath{\boldsymbol c}}
\newcommand{\boldcr}{\ensuremath{\boldsymbol{c}_r}}
\newcommand{\boldW}{\ensuremath{\boldsymbol{W}}}
\newcommand{\boldV}{\ensuremath{\boldsymbol{V}}}
\newcommand{\boldVr}{\ensuremath{\boldsymbol{V}_r}}
\newcommand{\reals}{\ensuremath{\mathbb{R}}}
\newcommand{\complex}{\ensuremath{\mathbb{C}}}
\newcommand{\Qr}{\ensuremath{\boldsymbol{Q}_r}}
\newcommand{\Ur}{\ensuremath{\boldsymbol{U}_r}}
\newcommand{\boldI}{\ensuremath{\boldsymbol{I}}}
\newcommand{\controlgram}{\ensuremath{\boldsymbol{\mathcal{P}}}}
\newcommand{\observegram}{\ensuremath{\boldsymbol{\mathcal{Q}}}}
\newcommand{\crosgram}{\ensuremath{\boldsymbol{\mathcal{X}}}}
\newcommand{\poles}{\ensuremath{\boldsymbol{\sigma}}}
\newcommand{\projX}{\ensuremath{\tilde{\boldsymbol{X}}_r}}
\newcommand{\fullX}{\ensuremath{\boldsymbol{X}}}
\newcommand{\boldx}{\ensuremath{\boldsymbol{x}}}
\newtheorem{thm}{Theorem}
\newtheorem{lemma}{Lemma}
\newtheorem{remark}{Remark}
\title{On the ADI method for the Sylvester Equation and the optimal-$\mathcal{H}_2$ points}
\author{Garret M. Flagg ~~and~~ Serkan Gugercin \\ \\
Department of Mathematics, Virginia Tech. \\
Blacksburg, VA, 24061-0123, USA\\
\{garretf,gugercin\}@vt.edu
}
\date{\small Preprint submitted to  \it{Applied Numerical Mathematics}, January 23, 2012}
\begin{document}
\maketitle
\abstract
The ADI iteration is closely related to the rational Krylov projection methods for constructing low rank approximations to the solution of Sylvester equation.  In this paper we show that the ADI and rational Krylov approximations are in fact equivalent when a special choice of shifts are employed in both methods. We will call these shifts pseudo $\mathcal{H}_2$-optimal shifts.  These shifts are also optimal in the sense that for the Lyapunov equation, they yield a residual which is orthogonal to the rational Krylov projection subspace.  Via several examples, we show that  the pseudo $\mathcal{H}_2$-optimal shifts consistently yield nearly optimal low rank approximations to the solutions of the Lyapunov equations.

\section{Introduction}
Let $\boldA \in \reals^{n\times n}$, $\boldB \in \reals^{m\times m}$ and $\boldY \in \reals^{n \times m}$ be 
given matrices. Then, the Sylvester equation for the unknown matrix $\fullX \in \reals^{n\times m}$ is given by
\begin{equation}\label{sylv_equation}
\boldA\fullX +\fullX\boldB+ \boldY = \mathbf{0}.
\end{equation}
The equation (\ref{sylv_equation}) has a unique solution if and only if $\lambda_i(\boldA) + \lambda_j(\boldB) \neq 0$ for 
$i=1,\ldots,n$ and $j=1,\ldots,m$.
 A special case of the Sylvester equation is the Lyapunov equation, where $\boldB=\boldA^*$ and $\boldY = \boldY^*\ge \boldsymbol{0}$.  Both the Sylvester and Lyapunov equations are an important tool in the analysis of asymptotically stable linear dynamical systems of the form 
\begin{eqnarray}
\label{ltisystemintro}
\dot{\boldx}(t)  =  \boldA \boldx(t) + \boldb\,u(t),~~~
y(t)  =  \boldc^* \boldx(t),
\end{eqnarray}
where $\boldA \in \reals^{n \times n}$ and $\boldb,\boldc^* \in \reals^{n}$.
In (\ref{ltisystemintro}),
$\boldx(t)\in \reals^n$, $u(t)\in \reals$, $y(t)\in \reals$, are, respectively the
\emph{state}, \emph{input}, and \emph{output},
of the underlying system. While the cross gramian 
$\crosgram$ of (\ref{ltisystemintro}) solves the Sylvester equation 
\begin{align} \label{cross}
\boldA \crosgram + \crosgram\boldA  + \boldb\boldc^* =  \mathbf{0}, 
\end{align}
 the controllability 
gramian $\controlgram$ and the observability gramian $\observegram$ solve the  Lyapunov equations 
\begin{align} \label{reaobs}
\boldA\controlgram +\controlgram\boldA^* + \boldb\boldb^* =  \mathbf{0} \qquad {\rm and}\qquad
\boldA^*\observegram+\observegram\boldA + \boldc^*\boldc=  \mathbf{0},
\end{align}
respectively. These three gramians are of fundamental importance especially in the concept of model reduction, see \cite{AntB}.  In what follows, we will mainly
focus on the Sylvester equation (\ref{sylv_equation}) where $\boldY$ is rank-$1$; hence our discussion already contains the Lyapunov equations as a special case.

The standard direct method for solving (\ref{sylv_equation}) is due to Bartels and Stewart 
\cite{bartels1972solution}. However, this method requires dense matrix operations such as the Schur decomposition; thus is not applicable in large-scale settings. For large-scale settings, iterative methods have been developed that take advantage of the sparsity and the low-rank structure of $\boldY$. The two most common ones are the Alternating Direction Implicit (ADI) Method (\cite{penzl200clr,benner2003statespace,Benner20091035, li2004low,gugercin2003amodified,stykel2004gbm,sorensen2002thesylvester,heinkenschloss2008btm,sabino2007solution,wachspress1988adi,truhar2007adi,wachspress2008trail}) and the (rational) Krylov projection methods (\cite{hu1992krylov,jbilou2006low,druskin:1875, jaimoukha1994krylov,simoncini2008new,stykel2011krylov,el2002block,bao2007new}).

The ADI method was first introduced by Peaceman and Rachford \cite{peaceman1955numerical} for solving parabolic and elliptic PDEs, and was later adapted to solving the Sylvester equation by Wachspress in \cite{wachspress1988adi}.  It is a fixed point iteration scheme for approximating \fullX. 
Given two sequences of shifts $\{\alpha_1, \alpha_2, \dots, \alpha_r, \dots\}, \{\beta_1,\beta_2,\dots,\beta_r,\dots\} \subset \complex$ and an initial guess $\fullX_0$, the ADI iteration for (\ref{sylv_equation}) proceeds as follows :
\begin{align}    \label{adim}
\fullX_i=&(\boldA-\alpha_i\boldI)(\boldA+\beta_i\boldI)^{-1}\fullX_{i-1}(\boldB-\beta_i\boldI)(\boldB+\alpha_i\boldI)^{-1}\\
&-(\alpha_i+\beta_i)(\boldA+\beta_i\boldI)^{-1}\boldY(\boldB+\alpha_i\boldI)^{-1}.
\end{align}

The performance of the ADI iteration depends heavily on the choice of shifts used in the iteration.  Several schemes have been developed for making asymptotically optimal shift selections if some information is known about the boundaries of the numerical range of \boldA, and $\boldB$. See \cite{wachspress1963extended,wachspress1988adi,starke1991optimal,Starke,sabino2007solution} and the references therein for further details on the shift selection problem in the ADI iteration.  

A closely related method to the ADI iteration is the rational Krylov projection method (RKPM).  In the RKPM, the Sylvester equation $\boldA\fullX +\fullX\boldB+ \boldY = \mathbf{0}$ is projected onto the rational Krylov subspaces $\mathcal{K}^{\text{rat}}_{r}(\boldA,\boldb,\boldsymbol{\sigma})=\text{span}\{(\sigma_1\boldI-\boldA)^{-1}\boldb, \dots, (\sigma_r\boldI-\boldA)^{-1}\boldb\}$ 
and $\mathcal{K}^{\text{rat}}_{r}(\boldB^*,\boldc,\bar{\boldsymbol{\mu}})$
 where  \poles$=\{\sigma_1,\dots \sigma_{r}\}$, and $\bar{\boldsymbol{\mu}}=\{\bar\mu_1,\dots, \bar \mu_r\}$ are the sets of shifts used to construct the respective  rational Krylov spaces and $\bar \nu$ denotes the conjugate of $\nu$.   
 See \cite{beckermann2010convergence} for further details regarding $\mathcal{K}^{\text{rat}}_{r}(\boldA,\boldb,\boldsymbol{\sigma})$, and constructing an orthonormal basis via the rational Arnoldi iteration.  Let $\Qr$ and $\Ur$ denote the orthonormal basis for 
 $\mathcal{K}^{\text{rat}}_{r}(\boldA,\boldb,\boldsymbol{\sigma})$ and 
$\mathcal{K}^{\text{rat}}_{r}(\boldB^*,\boldc,\bar{\boldsymbol{\mu}})$. Then,
the RKPM approximation is constructed by first solving
\begin{equation} \label{proj_sylvester}
\Qr^*\boldA\Qr\projX+\projX\Ur^*\boldB^*\Ur+ \Qr^*\boldb\boldc^*\Ur =\boldsymbol{0}
\end{equation}
and then approximating $\fullX$ by $\Qr\projX\Ur^*$.  The solution of the projected Sylvester equation (\ref{proj_sylvester}) is very cheap.  Like the ADI method, the RKPM method also relies heavily on a good choice of shifts to produce accurate results. In the next section we will derive results that show for a certain choice of shifts, the RKPM and ADI methods are indeed equivalent.  

Since in almost all applications, the quantities $\boldsymbol{A}$, $\boldsymbol{B}$, $\boldsymbol{b}$, and $\boldsymbol{c}$ are real, we will assume that the set of shifts  
$\boldsymbol{\sigma}$ and $\boldsymbol{\mu}$ are closed under conjugation so that the approximants are real as well. This will guarantee that the orthonormal bases $\Qr$ 
for $\mathcal{K}^{\text{rat}}_{r}(\boldA,\boldb,\boldsymbol{\sigma})$
and $\Ur$ for $\mathcal{K}^{\text{rat}}_{r}(\boldB^*,\boldc,\bar{\boldsymbol{\mu}})$ can be computed to be real as well.
\section{Equivalence of the ADI and Rational Krylov Projection Methods for pseudo-$\htwo$ optimal points }
In this section, we present our main results illustrating the connection between the ADI and RKPM. Since the discussion requires the concept of $\htwo$-optimal points for model reduction, we first briefly review the $\htwo$ approximation problem. 

\subsection{Optimal $\htwo$ model reduction}
For a full-order model as given in (\ref{ltisystemintro}),
the model reduction problem seeks to construct a dynamical system
\begin{eqnarray} \label{redsysintro}
\dot{\boldx}_r (t) = \boldA_r \boldx_r (t) + \boldb_r u(t),~
y_r(t)  =  \boldc_r^* \boldx_r (t)  
\end{eqnarray}
of much smaller dimension $r \ll n$, with  $\boldA_r \in \reals^{r \times r}$ and
$\boldb_r,\,\boldc_r^* \in \reals^{r}$ such that $y_r(t)$ approximates $y(t)$ well for a wide range of inputs $u(t)$.
The reduced-model in (\ref{ltisystemintro})  is usually obtained  via state-space projection:
Two matrices $\boldV_r, \ \boldW_r \in \reals^{n \times r}$ are constructed with 
$\boldW_r^*\boldV_r = \boldI_r$
to produce
\begin{equation}  \label{eqn:reduction}
\boldA_r=\boldW_r^* \boldA\boldV_r,\qquad
\boldb_r=\boldW_r^*\boldb,\qquad{\rm and}\qquad
\boldc_r= \boldV_r \,\boldc
\end{equation}

 One can measure the quality of the approximation using 
the concept of transfer function. By taking the Laplace transforms of  (\ref{ltisystemintro}) and 
(\ref{redsysintro}), one obtains the transfer functions
 $H(s)=\boldc(s\boldI-\boldA)^{-1}\boldb$ and  $H_r(s)=\boldcr(s\boldI_r-\boldAr)^{-1}\boldbr$, respectively.  Hence, one can consider model reduction in terms of these transfer functions as
 approximating a degree-$n$ rational function $H(s)$ with a degree-$r$ one $H_r(s)$. For more details on model reduction of linear dynamical systems, see \cite{AntB}. 
  
In this paper, we focus on the $\htwo$-norm to measure accuracy of the reduced-model. 
The $\mathcal{H}_2$ optimal model reduction problem seeks to construct a reduced system as in (\ref{redsysintro}), so that $H_r(s)$ minimizes  the $\htwo$ error over all stable linear dynamical systems of the form (\ref{redsysintro}), i.e.
\begin{equation} \label{h2opt}
\|H -H_r \|_{ _{\mathcal{H}_2}} =  \min_{{\small \rm deg} (\tilde{H}_r) = r} \| H - \tilde{H}_r\|_\htwo
\end{equation}
where
$$\|H -H_r \|_{ _{\mathcal{H}_2}} = \left( \frac{1}{2\pi} \int_{-\infty}^\infty \left| H(\imath \omega) -  H_r(\imath \omega)  \right|^2 d\omega\right)^{1/2}.$$ 
 Several methods have been introduced to solve (\ref{h2opt}); see, for example,
\cite{spanos1992anewalgorithm,hyland1985theoptimal,wilson1970optimum,zigic1993contragredient,meieriii1967approximation,H2,gugercin2006rki,vandooren2008hom,bunse-gerstner2009hom,gugercin2005irk,beattie2007kbm,beattie2009trm}, and the references therein. Since the optimization problem (\ref{h2opt}) is nonconvex, the common approach involves finding  reduced-order models satisfying the first-order necessary conditions of 
 $\htwo$-optimality.  The next theorem states the interpolation-based necessary conditions for $\mathcal{H}_2$ optimality introduced by
 Meier and Luenberger \cite{meieriii1967approximation}.
\begin{thm} \label{thm:meier}\cite{meieriii1967approximation,H2}
Given a full-order system $H(s)$ of order $n$, if $H_r(s)=\sum\limits_{i=1}^r\frac{\phi_i}{s-{\lambda}_i}$ is an $\mathcal{H}_2$ optimal approximation to $H(s)$, then
\begin{eqnarray}
H(-{\lambda}_i) &=&H_r(-{\lambda_i})~~ \mbox{for}\qquad i=1, \dots, r,~~\mbox{and} \label{HHr}\\ 
H'(-{\lambda}_i)&=&H_r'(-{\lambda_i})~~ \mbox{for}\qquad i=1, \dots, r  \label{HpHrp}
\end{eqnarray}
\end{thm}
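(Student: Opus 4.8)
The plan is to exploit the pole–residue form of $H_r$ and recast $\htwo$-optimality as an orthogonality (Galerkin) condition on the error $H-H_r$. Writing the squared error as $J=\|H-H_r\|_{\htwo}^2=\langle H-H_r,\,H-H_r\rangle$, I regard $J$ as a smooth function of the $2r$ parameters $\{\phi_i,\lambda_i\}$ (assuming, as the ansatz $H_r(s)=\sum_i \phi_i/(s-\lambda_i)$ does, that the optimal reduced model has $r$ distinct poles in the open left half-plane and nonzero residues). Since $H_r$ minimizes $J$, the first-order conditions hold. Because $H_r$ enters one slot of $\langle\cdot,\cdot\rangle$ holomorphically and the other anti-holomorphically, the cleanest bookkeeping is to treat each parameter and its conjugate as independent and set the Wirtinger derivatives $\partial J/\partial\overline{\phi_i}$ and $\partial J/\partial\overline{\lambda_i}$ to zero; since $J$ is real-valued this is equivalent to vanishing of the ordinary gradient and yields the \emph{full} complex orthogonality of $H-H_r$ to the tangent directions $\partial H_r/\partial\phi_i=(s-\lambda_i)^{-1}$ and $\partial H_r/\partial\lambda_i=\phi_i(s-\lambda_i)^{-2}$. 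Thus, for $i=1,\dots,r$ and using $\phi_i\neq0$,
\begin{equation*}
\langle H-H_r,\,(s-\lambda_i)^{-1}\rangle=0
\qquad\text{and}\qquad
\langle H-H_r,\,(s-\lambda_i)^{-2}\rangle=0.
\end{equation*}

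The analytic heart of the argument is a residue formula for the $\htwo$ inner product. For any stable $F$ (hence analytic in the closed right half-plane, and strictly proper so that it decays on large semicircular arcs) and any $\mu$ in the open left half-plane, I would write $\frac{1}{2\pi}\int_{-\infty}^{\infty}F(\imath\omega)\overline{G(\imath\omega)}\,\ud\omega$ as a contour integral along the imaginary axis and close it in the right half-plane. The conjugated factor, viewed as a rational function of $s$, has its poles at $s=-\bar\mu$ in the right half-plane, so only these contribute, giving
\begin{equation*}
\langle F,\,(s-\mu)^{-1}\rangle=F(-\bar\mu)
\qquad\text{and}\qquad
\langle F,\,(s-\mu)^{-2}\rangle=-F'(-\bar\mu),
\end{equation*}
where the double-pole case produces the derivative through the order-two residue. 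Applying these two identities with $F=H-H_r$ and $\mu=\lambda_i$ converts the two orthogonality conditions into $H(-\bar\lambda_i)=H_r(-\bar\lambda_i)$ and $H'(-\bar\lambda_i)=H_r'(-\bar\lambda_i)$.

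It remains to remove the conjugates. Because $H$ and $H_r$ are real (the shift sets are closed under conjugation), the pole set $\{\lambda_i\}$ is self-conjugate, so $\{-\bar\lambda_i\}=\{-\lambda_i\}$ as sets; combined with the reality relation $\overline{H(\bar z)}=H(z)$ and likewise for $H_r$, this upgrades the conditions at $-\bar\lambda_i$ to exactly $H(-\lambda_i)=H_r(-\lambda_i)$ and $H'(-\lambda_i)=H_r'(-\lambda_i)$ for $i=1,\dots,r$, which are precisely (\ref{HHr})–(\ref{HpHrp}).

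The step I expect to be most delicate is not the algebra but the justification of the two preliminary reductions. First, one must verify the residue formula rigorously: that stability places the poles of $F$ strictly in the left half-plane, that strict properness makes the arc contribution vanish, and that the sign and the conjugation of the argument $-\bar\mu$ come out correctly in both the simple- and double-pole cases. Second, one must ensure the parametrization is regular at the optimizer, i.e.\ that the optimal $H_r$ genuinely has simple poles with nonzero residues, so that $\partial H_r/\partial\lambda_i$ is well defined and the tangent directions $(s-\lambda_i)^{-1}$ and $(s-\lambda_i)^{-2}$ span a genuine tangent space. Degenerate optimizers with repeated poles or vanishing residues would fall outside the pole–residue ansatz in the statement and would require a separate confluent-interpolation or limiting argument.
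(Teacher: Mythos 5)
The paper does not actually prove Theorem \ref{thm:meier}: it is stated as a known result with citations to \cite{meieriii1967approximation} and \cite{H2}, so there is no in-paper argument to compare against. Your proof is correct and is essentially the classical argument from those cited sources: first-order stationarity of the squared $\htwo$ error in the pole--residue parameters $(\phi_i,\lambda_i)$ gives orthogonality of $H-H_r$ to $(s-\lambda_i)^{-1}$ and $(s-\lambda_i)^{-2}$, the contour/residue evaluation of the $\htwo$ inner product (closing in the right half-plane, where only the pole at $-\bar{\mu}$ contributes) converts these into Hermite interpolation at $-\bar{\lambda}_i$, and closure of the pole set under conjugation removes the conjugates. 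The caveats you flag at the end (simple poles, nonzero residues) are precisely the nondegeneracy hypotheses already built into the pole--residue ansatz of the statement, so your argument has no gap relative to what is claimed.
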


A reduced-order model which satisfies the $\htwo$-optimality conditions can be obtained by using the Iterative Rational Krylov Algorithm (IRKA) of Gugercin et. al. in \cite{H2}. However, in this paper we will focus on satisfying only (\ref{HHr}) (without the derivative condition). We will call these interpolation points \emph{pseudo $\htwo$-optimal points} to emphasize that they only satisfy a subset of the optimality conditions. 
In terms of the projection framework  (\ref{eqn:reduction}) for model reduction, this corresponds to finding interpolation points  $\boldsymbol{\sigma} = \{\sigma_1,\ldots,\sigma_r\}$ and choosing, in (\ref{eqn:reduction}),  $\boldV_r = \boldW_r = \Qr$ where $\Qr$ is an orthonormal basis for the rational Krylov subspace $\mathcal{K}^{\text{rat}}_{r}(\boldA,\boldb,\boldsymbol{\sigma})$ so that  
$\{\lambda_1,\ldots,\lambda_r\}$, i.e. 
the eigenvalues of $\boldA_r  = \Qr^T \boldA \Qr$, become the mirror images of the interpolations points $\boldsymbol{\sigma} =\{\sigma_1,\ldots,\sigma_r\} $, i.e.  
\begin{equation} \label{h2onesided}
\boldsymbol{\lambda}(\boldA_r) =\boldsymbol{\lambda}(\Qr^T \boldA \Qr) =   -\boldsymbol{\sigma}.
\end{equation}
The  emph{pseudo $\htwo$-optimal points} interpolation points can be computed iteratively in a manner similar to IRKA  \cite{H2}
as done in  \cite{gugercin2011phmimo} for port-Hamiltonian systems.

\subsection{The ADI Iteration and Rational Krylov Projection Method}
 
The main theorem requires the following lemma, which connects the ADI approximation for the Sylvester equation with rational Krylov subspaces. This extends an earlier result by Li and White \cite{li2002low} which establishes a similar connection for the  the case of the Lyapunov equation. 
\begin{lemma}\label{lemma:l1}
Let $\boldY=\boldb\boldc^*$, where $\boldb \in \reals^{n}$ and $\boldc \in \reals^{m}$. Let $\{\sigma_1,\dots, \sigma_r\}$ and $\{\mu_1,\dots, \mu_r\}$ be two collections of shifts that satisfy $\Re(\mu_i),\Re(\sigma_i)>0$ for $i=1,\dots, r$. Suppose $\boldsymbol{X}_r$ is the approximate solution to the Sylvester equation (\ref{sylv_equation}) obtained by applying the pair of shifts $\alpha_i=-\sigma_i$  and  $\beta_i=-\mu_i$  in  the ADI iteration (\ref{adim}) for 
$i=1,\ldots,r$ with $\boldsymbol{X}_0 = \mathbf{0}$.  Then  there exist $\boldsymbol{L}_r\in \complex^{n\times r}$ and $\boldsymbol{R}_r \in \complex^{m \times r}$ such that $\boldsymbol{X}_r=\boldsymbol{L}_r\boldsymbol{R}_r^*$ and $\text{colspan}(\boldsymbol{L}_r)\subset\mathcal{K}^{\text{rat}}_{r}(\boldA,\boldb,\boldsymbol{\mu})$ and $\text{colspan}(\boldsymbol{R}_r)\subset\mathcal{K}^{\text{rat}}_{r}(\boldB^*,\boldc,\bar{\boldsymbol{\sigma}})$
\end{lemma}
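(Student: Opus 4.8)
The plan is to proceed by induction on $r$, carrying along at each step an explicit low-rank factorization $\fullX_i = \boldsymbol{L}_i\boldsymbol{R}_i^*$ whose two factors are certified to lie in the appropriate rational Krylov subspaces. First I would substitute the prescribed shifts $\alpha_i = -\sigma_i$ and $\beta_i = -\mu_i$ into the iteration (\ref{adim}), so that the four shifted factors in the first term become $\boldA + \sigma_i\boldI$, $(\boldA - \mu_i\boldI)^{-1}$, $\boldB + \mu_i\boldI$, and $(\boldB - \sigma_i\boldI)^{-1}$, while the scalar weight $-(\alpha_i+\beta_i)$ turns into $\sigma_i + \mu_i$. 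With $\fullX_0 = \mathbf{0}$ and $\boldY = \boldb\boldc^*$, the first iterate is the rank-one matrix
\begin{equation*}
\fullX_1 = (\sigma_1+\mu_1)(\boldA - \mu_1\boldI)^{-1}\boldb\,\boldc^*(\boldB - \sigma_1\boldI)^{-1},
\end{equation*}
which already establishes the base case: the left factor is a scalar multiple of $(\mu_1\boldI - \boldA)^{-1}\boldb$, and the conjugate transpose of the right factor is a scalar multiple of $(\bar\sigma_1\boldI - \boldB^*)^{-1}\boldc$, so both columns sit in the level-one subspaces.

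The engine of the induction, which I would isolate as a separate observation, is a rational-function characterization of these subspaces: a vector lies in $\mathcal{K}^{\text{rat}}_{k}(\boldA,\boldb,\{\mu_1,\dots,\mu_k\})$ exactly when it can be written as $\frac{p(\boldA)}{\prod_{j=1}^{k}(\mu_j\boldI - \boldA)}\boldb$ for some polynomial $p$ of degree at most $k-1$ (for distinct shifts this is just partial fractions over a common denominator). I would then verify that the ADI left-update $(\boldA + \sigma_i\boldI)(\boldA - \mu_i\boldI)^{-1}$ carries such a vector from level $i-1$ up to level $i$: rewriting $(\boldA - \mu_i\boldI)^{-1} = -(\mu_i\boldI - \boldA)^{-1}$ appends exactly one new pole at $\mu_i$, while multiplication by $\boldA + \sigma_i\boldI$ raises the numerator degree by one, from at most $i-2$ to at most $i-1$, which is precisely the degree budget permitted in $\mathcal{K}^{\text{rat}}_{i}(\boldA,\boldb,\{\mu_1,\dots,\mu_i\})$. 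An identical computation, now with $\boldB^*$, $\boldc$, and the conjugated update $(\boldB^* - \bar\sigma_i\boldI)^{-1}(\boldB^* + \bar\mu_i\boldI)$, handles the right factor and its pole set $\bar{\boldsymbol{\sigma}}$.

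With these pieces in hand the induction closes cleanly. Assuming $\fullX_{i-1} = \boldsymbol{L}_{i-1}\boldsymbol{R}_{i-1}^*$ with the factors in the level-$(i-1)$ subspaces, I would insert this factorization into the updated iteration and split off the two additive terms, defining
\begin{equation*}
\boldsymbol{L}_i = \Big[\,(\boldA + \sigma_i\boldI)(\boldA - \mu_i\boldI)^{-1}\boldsymbol{L}_{i-1}\,,\ (\sigma_i+\mu_i)(\boldA - \mu_i\boldI)^{-1}\boldb\,\Big],
\end{equation*}
together with the analogous $\boldsymbol{R}_i$ built from $(\boldB^* - \bar\sigma_i\boldI)^{-1}(\boldB^* + \bar\mu_i\boldI)\boldsymbol{R}_{i-1}$ and $(\boldB^* - \bar\sigma_i\boldI)^{-1}\boldc$; a direct multiplication confirms $\fullX_i = \boldsymbol{L}_i\boldsymbol{R}_i^*$. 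The first block column of $\boldsymbol{L}_i$ stays in the level-$i$ subspace by the update observation above, and the second block column supplies the new direction $(\mu_i\boldI - \boldA)^{-1}\boldb$, so $\text{colspan}(\boldsymbol{L}_i) \subset \mathcal{K}^{\text{rat}}_{i}(\boldA,\boldb,\{\mu_1,\dots,\mu_i\})$, and symmetrically for $\boldsymbol{R}_i$. Setting $i = r$ gives the claim.

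The hard part will not be the algebra of unrolling the iteration but making the rational-function bookkeeping airtight: I must verify carefully that the characterization as $\frac{p(\boldA)}{\prod_j(\mu_j\boldI - \boldA)}\boldb$ really is equivalent to membership in the spanned subspace, and that the "$+1$ pole, $+1$ numerator degree" step lands in the correct level at every stage. If repeated shifts are permitted, the subtlety is that the basis must then be built from derivatives of the resolvent rather than from distinct resolvent evaluations, and I would need to confirm that the same degree-and-pole count argument survives that modification.
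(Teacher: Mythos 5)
Your proof is correct and takes essentially the same route as the paper's: induction on the iteration step with exactly the same base case and the same explicit block factorization $\boldsymbol{L}_i = \bigl[(\boldA+\sigma_i\boldI)(\boldA-\mu_i\boldI)^{-1}\boldsymbol{L}_{i-1},\ (\sigma_i+\mu_i)(\boldA-\mu_i\boldI)^{-1}\boldb\bigr]$ and its analogue for $\boldsymbol{R}_i$. The only difference is bookkeeping: the paper certifies subspace membership by keeping each column in partial-fraction form (a linear combination of the resolvents $(\lambda-\mu_j)^{-1}$) rather than via your common-denominator degree count, and both arguments implicitly assume distinct shifts --- a caveat you flag explicitly but the paper does not.
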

\begin{proof}
The proof is given by induction on $i$, the iteration step.  First note that for $i=1$, $\boldsymbol{X}_1=(\mu_1+\sigma_1)(\boldA-\mu_1\boldI)^{-1}\boldb\boldc^*(\boldB-\sigma_1\boldI)^{-1}$, so let $\boldsymbol{L}_1=[(\mu_1+\sigma_1)(\boldA-\mu_1\boldI)^{-1}\boldb]$ and $\boldsymbol{R}_1=[(\boldB^*-\bar{\sigma_1}\boldI)^{-1}\boldc]$.  Then $\boldsymbol{L}_1$ and $\boldsymbol{R}_1$ clearly satisfy the hypothesis and $\boldsymbol{X}_1 = \boldsymbol{L}_1\boldsymbol{R}_1^*$.  
Now suppose that  the statement holds for $\boldsymbol{X}_{i}$. Then, for $j=1,\dots, i$, the $j^{\rm th}$ column of $\boldsymbol{L}_i$ is $T_i^{(j)}(\boldA)\boldb$, where $T_i^{(j)}(\lambda)$ is a proper rational function that lies in the span of $\{\frac{1}{\lambda-\mu_1},\dots,\frac{1}{\lambda-\mu_i}\}$.  Similarly, the $j^{\rm th}$ column of $\boldsymbol{R}_i$ is $S_i^{(j)}(\boldB^*)\boldc$, where $S_i^{(j)}(\lambda)$ lies in the span of $\{\frac{1}{\lambda-\bar{\sigma_1}},\dots,\frac{1}{\lambda-\bar{\sigma_i}}\}$.  
 Therefore $\boldsymbol{X}_{i+1}$ can be written as 
\begin{align*}
\boldsymbol{X}_{i+1}=&(\boldA+\sigma_{i+1}\boldI)(\boldA-\mu_{i+1}\boldI)^{-1}\boldsymbol{L}\boldsymbol{R}^*(\boldB+\mu_{i+1}\boldI)(\boldB-\sigma_{i+1}\boldB)^{-1}\\
&+(\mu_{i+1}+\sigma_{i+1})(\boldA-\mu_{i+1}\boldI)^{-1}\boldb\boldc^*(\boldB-\sigma_{i+1}\boldI)^{-1}\\
=&\sum\limits_{j=1}^{i}(\boldA+\sigma_{i+1}\boldI)(\boldA-\mu_{i+1}\boldI)^{-1}T_i^{(j)}(\boldA)\boldb\boldc^* S_i^{(j)}(\boldB)(\boldB+\mu_{i+1}\boldI)(\boldB-\sigma_{i+1}\boldI)^{-1}\\
&+(\mu_{i+1}+\sigma_{i+1})(\boldA-\mu_{i+1}\boldI)^{-1}\boldb\boldc^*(\boldB-\sigma_{i+1}\boldI)^{-1}
\end{align*}
For $j=1,\dots i$, let the $j^{\rm th}$ column of $\boldsymbol{L}_{i+1}$ be $(\boldA+\sigma_{i+1}\boldI)(\boldA-\mu_{i+1}\boldI)^{-1}T_i^{(j)}(\boldA)\boldb$  and let the $(i+1)^{\rm th}$ column be $T_{i+1}^{(i+1)}(\boldA)\boldb=(\mu_{i+1}+\sigma_{i+1})(\boldA-\mu_{i+1}\boldI)^{-1}\boldb$.   Then clearly $\text{colspan}(\boldsymbol{L}_{i+1} )\subset \mathcal{K}^{\text{rat}}_{i+1}(\boldA,\boldb,\boldsymbol{\mu})$.  Similarly, let  $(\boldB^*-\bar{\sigma}_{i+1}\boldI)^{-1}(\boldB^*+\bar{\mu}_{i+1}\boldI)S_i^{(j)}(\boldB^*)\boldc$ be the $j^{\rm th}$ column of $\boldsymbol{R}_{i+1}$ for $j=1,\dots, i$, and  $S_{i+1}^{(i+1)}(\boldB^*)\boldc=(\boldB^*-\bar{\sigma}_{i+1}\boldI)^{-1}\boldc$ be the $(i+1)^{\rm th}$ column.  Then $\text{colspan}(\boldsymbol{R}_{i+1})\subset\mathcal{K}^{\text{rat}}_{i+1}(\boldB^*,\boldc,\bar{\boldsymbol{\sigma}})$. Finally, we note that by construction, $\boldsymbol{X}_{i+1}=\boldsymbol{L}_{i+1}\boldsymbol{R}_{i+1}^*$.
\end{proof}
Next, we give our first main result showing that  the approximate solution of 
the Sylvester equation (\ref{sylv_equation}) by ADI and RKPM are indeed equivalent 
when the shifts are chosen as pseudo-$\htwo$ optimal points.  This result applied to the special case of Lyapunov equation was first presented at the 2010 SIAM Annual Meeting \cite{Flagg_presentation} then later published independently in \cite{druskin:1875}.  
Our new result here, on the other hand, is more general than both \cite{Flagg_presentation} and 
\cite{druskin:1875} since it tackles the case of Sylvester equation and includes the Lyapunov equation as a special case. Moreover, while
the proof given in \cite{druskin:1875}  for the special case of Lyapunov equation
makes use of a novel connection between the ADI iteration and the so-called Skeleton approximation framework first developed in the work of Tyrtyshnikov \cite{tyrtyshnikov1996mosaic}, 
the proof we provide here for the more general Sylvester equation case is given directly in terms of  rational Krylov interpolation conditions, and in that sense is simpler. 
 \begin{thm} \label{sylvester_eq}
Given the Sylvester equation (\ref{sylv_equation}) with $\boldY=\boldb\boldc^*$, where $\boldb \in \reals^{n}$ and $\boldc \in \reals^{m}$, let $\Qr \in \reals^{n \times r}$ be an orthonormal basis for the rational Krylov subspace $\mathcal{K}^{\text{rat}}_{r}(\boldA,\boldb,\boldsymbol{\sigma})$ and let $\Ur \in \reals^{m \times r}$ be an orthonormal basis for the rational Krylov subspace
$\mathcal{K}^{\text{rat}}_{r}(\boldB^*,\boldc,\bar{\boldsymbol{\sigma}})$ 
for a set of shifts $\boldsymbol{\sigma} = \{\sigma_1,\ldots,\sigma_r\}$
 where $\Re(\sigma_i)>0$ for $i=1,\ldots,r$.  Let $\projX  \in \reals^{r \times r}$ solve the projected Sylvester equation
\begin{equation} \label{projsylvequ}
\Qr^*\boldA\Qr\projX+\projX\Ur^* \boldB\Ur+ \Qr^*\boldb\boldc^*\Ur =\boldsymbol{0},
\end{equation}
and let $\fullX_r \in \reals^{n \times m}$ be computed by applying the shifts $\alpha_i=-\sigma_i$ and $\beta_i=-\sigma_i$ to exactly $r$ steps of the ADI iteration (\ref{adim}) for $i=1,\ldots,r$.  Then $\fullX_r= \Qr \projX \Ur^*$ if and only if either $\boldsymbol{\lambda}(\Qr^*\boldA\Qr)=-\{\sigma_1,\dots,\sigma_r\}$ or $\boldsymbol{\lambda}(\Ur^*\boldB\Ur)=-\{\sigma_1,\dots,\sigma_r\}$.
\end{thm}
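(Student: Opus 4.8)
The plan is to characterize both approximants through the Sylvester residual of the ADI iterate and then reduce the claimed equivalence to a rank-one orthogonality condition that encodes the eigenvalue constraint.

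First I would set up the common-subspace picture. Applying Lemma~\ref{lemma:l1} with $\mu_i=\sigma_i$ shows that the ADI iterate factors as $\fullX_r=\boldsymbol{L}_r\boldsymbol{R}_r^{*}$ with $\text{colspan}(\boldsymbol{L}_r)\subseteq\mathcal{K}^{\text{rat}}_{r}(\boldA,\boldb,\poles)$ and $\text{colspan}(\boldsymbol{R}_r)\subseteq\mathcal{K}^{\text{rat}}_{r}(\boldB^*,\boldc,\bar{\poles})$, so $\fullX_r=\Qr\boldsymbol{Z}\Ur^{*}$ for a unique $\boldsymbol{Z}\in\complex^{r\times r}$. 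Because $\Qr^{*}\Qr=\Ur^{*}\Ur=\boldI$ and the projected equation~(\ref{projsylvequ}) is uniquely solvable by $\projX$, I get $\fullX_r=\Qr\projX\Ur^{*}$ iff $\boldsymbol{Z}=\projX$ iff $\boldsymbol{Z}$ solves (\ref{projsylvequ}); substituting $\fullX_r=\Qr\boldsymbol{Z}\Ur^{*}$ into (\ref{projsylvequ}) and invoking orthonormality, this is equivalent to the Galerkin condition $\Qr^{*}\boldsymbol{S}_r\Ur=\boldsymbol{0}$, where $\boldsymbol{S}_r:=\boldA\fullX_r+\fullX_r\boldB+\boldb\boldc^{*}$ is the full Sylvester residual.

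Next I would compute $\boldsymbol{S}_r$ in closed form. With $\alpha_i=\beta_i=-\sigma_i$ one checks that the exact solution $\fullX$ is a fixed point of each single-shift ADI map, so the error $\boldsymbol{E}_r:=\fullX-\fullX_r$ obeys $\boldsymbol{E}_i=(\boldA+\sigma_i\boldI)(\boldA-\sigma_i\boldI)^{-1}\boldsymbol{E}_{i-1}(\boldB+\sigma_i\boldI)(\boldB-\sigma_i\boldI)^{-1}$; since $\fullX_0=\boldsymbol{0}$ this yields $\boldsymbol{E}_r=\phi(\boldA)\,\fullX\,\phi(\boldB)$ with $\phi(z)=\prod_{i=1}^{r}\frac{z+\sigma_i}{z-\sigma_i}$. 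Feeding this into $\boldsymbol{S}_r=-(\boldA\boldsymbol{E}_r+\boldsymbol{E}_r\boldB)$, commuting $\phi(\boldA)$ past $\boldA$ and $\phi(\boldB)$ past $\boldB$, and applying $\boldA\fullX+\fullX\boldB=-\boldb\boldc^{*}$, gives the rank-one residual $\boldsymbol{S}_r=\phi(\boldA)\boldb\,\boldc^{*}\phi(\boldB)$. Consequently $\Qr^{*}\boldsymbol{S}_r\Ur=\big(\Qr^{*}\phi(\boldA)\boldb\big)\big(\boldc^{*}\phi(\boldB)\Ur\big)$ is an outer product, so it vanishes iff $\Qr^{*}\phi(\boldA)\boldb=\boldsymbol{0}$ or $\boldc^{*}\phi(\boldB)\Ur=\boldsymbol{0}$.

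The heart of the argument --- and the step I expect to be the main obstacle --- is to match each of these vector conditions to a mirror-image eigenvalue constraint, i.e. to prove $\Qr^{*}\phi(\boldA)\boldb=\boldsymbol{0}$ iff $\boldsymbol{\lambda}(\Qr^{*}\boldA\Qr)=-\poles$. I would first observe that with $d(z)=\prod_{i=1}^{r}(z-\sigma_i)$ and $\boldsymbol{w}=d(\boldA)^{-1}\boldb$, the rational Krylov space coincides with the ordinary Krylov space $\mathcal{K}_r(\boldA,\boldsymbol{w})$, so $\boldsymbol{w}\in\text{colspan}(\Qr)$ and $\boldA^{k}\boldsymbol{w}\in\text{colspan}(\Qr)$ for $k\le r-1$. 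Writing $n(z)=\prod_{i=1}^{r}(z+\sigma_i)$ so that $\phi=n/d$, a projector computation (using $\Qr\Qr^{*}\boldA^{k}\boldsymbol{w}=\boldA^{k}\boldsymbol{w}$ for $k\le r-1$ and discarding the component of $\boldA^{r}\boldsymbol{w}$ orthogonal to $\Qr$) collapses to the clean identity $\Qr^{*}\phi(\boldA)\boldb=n(\boldAr)\boldsymbol{w}_r$, where $\boldAr=\Qr^{*}\boldA\Qr$ and $\boldsymbol{w}_r=\Qr^{*}\boldsymbol{w}$. Since $\boldsymbol{w}_r$ is cyclic for $\boldAr$ (its iterates $\boldAr^{k}\boldsymbol{w}_r=\Qr^{*}\boldA^{k}\boldsymbol{w}$ are independent), $n(\boldAr)\boldsymbol{w}_r=\boldsymbol{0}$ forces $n(\boldAr)=\boldsymbol{0}$, and as $n$ is monic of degree $r$ this holds iff $n$ is the characteristic polynomial of $\boldAr$, i.e. iff $\boldsymbol{\lambda}(\boldAr)=-\poles$. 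The symmetric computation on the $\boldB$-side --- where the conjugated shifts in $\Ur$ and the relation $\boldsymbol{\lambda}(\Ur^{*}\boldB^{*}\Ur)=\overline{\boldsymbol{\lambda}(\Ur^{*}\boldB\Ur)}$ absorb the complex conjugations --- gives $\boldc^{*}\phi(\boldB)\Ur=\boldsymbol{0}$ iff $\boldsymbol{\lambda}(\Ur^{*}\boldB\Ur)=-\poles$. Combined with the rank-one dichotomy above, this yields exactly the stated ``either/or'' equivalence. The subtle points to watch are the unique solvability of the projected equation, needed for the reduction $\boldsymbol{Z}=\projX$, and the basis-independence of the cyclicity argument, since $\Qr$ is an arbitrary orthonormal basis rather than the Arnoldi basis.
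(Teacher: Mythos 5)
Your proof is correct, but it follows a genuinely different route from the paper's. The paper argues by running the ADI iteration \emph{on the projected equation} (\ref{projsylvequ}) itself: the spectral condition $\boldsymbol{\lambda}(\Qr^*\boldA\Qr)=-\boldsymbol{\sigma}$ forces exact termination of ADI in $r$ steps there, and then the factorization of Lemma \ref{lemma:l1} together with the interpolation (exactness) property of the Galerkin projection, $\Qr(\sigma_i\boldI_r-\boldAr)^{-1}\boldbr=(\sigma_i\boldI-\boldA)^{-1}\boldb$, shows that lifting the projected ADI solution reproduces the full ADI iterate; the converse direction reuses the same interpolation result. You instead use Lemma \ref{lemma:l1} only to place $\fullX_r$ in $\text{colspan}(\Qr)\times\text{colspan}(\Ur)$, then characterize $\Qr\projX\Ur^*$ by the two-sided Galerkin condition $\Qr^*\boldsymbol{S}_r\Ur=\boldsymbol{0}$, compute the ADI residual in closed form as the rank-one matrix $\phi(\boldA)\boldb\,\boldc^*\phi(\boldB)$ with $\phi(z)=\prod_i\frac{z+\sigma_i}{z-\sigma_i}$, and convert the vanishing of each factor into the mirror-spectrum condition via the degree-$r$ polynomial exactness identity $\Qr^*n(\boldA)\boldsymbol{w}=n(\boldAr)\Qr^*\boldsymbol{w}$ plus cyclicity and Cayley--Hamilton. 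Your approach buys a fully symmetric treatment of both implications and of the ``either/or'' branches (the dichotomy is exactly the outer-product structure of the projected residual), and it makes visible the rank-one residual formula that also underlies the paper's Theorem \ref{thm:lyap}; the paper's approach is shorter once Lemma \ref{lemma:l1} is in hand and stays entirely inside the interpolation framework it wants to emphasize. Two caveats you correctly flag are shared with the paper rather than introduced by you: unique solvability of the projected equation (needed only in your ($\Leftarrow$) direction, and implicit in the theorem's phrase ``let $\projX$ solve''), and nondegeneracy of the rational Krylov space (dimension exactly $r$, distinct shifts), which both proofs assume tacitly.
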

\begin{proof}
($\Leftarrow$) First suppose that $\lambda(\Qr^*\boldA\Qr)=-\{\sigma_1,\dots,\sigma_r\}$.  The proof remains the same if we instead suppose that $\lambda(\Ur^*\boldB\Ur)=-\{\sigma_1,\dots,\sigma_r\}$.  Let $\boldAr=\Qr^*\boldA\Qr$, and $\boldbr=\Qr^*\boldb$, $\boldB_r=\Ur^*\boldB\Ur$, and $\boldcr=\Ur^*\boldc$.  Note that after we apply
$r$ steps of the  ADI iteration with the set of shifts  $\alpha_i = \beta_i = -\sigma_i$ to  the projected Sylvester equation (\ref{projsylvequ}), we obtain the exact solution $\projX$, since $\boldsymbol{\lambda}(\boldAr)=-\{\sigma_1,\dots,\sigma_r\}$.  By Lemma \ref{lemma:l1}, at the $rth$ step of the ADI iteration $\projX=\tilde{\boldsymbol{L}}_r\tilde{\boldsymbol{R}}_r^*$ where $\tilde{\boldsymbol{L}}_r=[T^{(1)}(\boldAr)\boldbr, \dots, T^{(r)}(\boldAr)\boldbr ]$ where $T^{(i)}(\boldAr)\boldbr$ are rational functions that lie in $\mathcal{K}^{\text{rat}}_{r}(\boldAr,\boldbr,\boldsymbol{\sigma}).$ Similarly $\tilde{\boldsymbol{R}}_r=[S^{(1)}(\boldB_r^*)\boldcr^*, \dots, S^{(r)}(\boldB_r^*)\boldcr^*]$ where the $S^{(i)}(\boldB_r^*)\boldcr$ are rational functions that lie in  $\mathcal{K}^{\text{rat}}_{r}(\boldB_r^*,\boldcr^*,\bar{\boldsymbol{\sigma}}).$ Furthermore, for the same shifts, $\alpha_i = \beta_i = -\sigma_i$ for $i=1,\ldots,r$, applied to $r$ steps of the ADI iteration on the full Sylvester equation (\ref{sylv_equation}), we have $\fullX_r= \boldsymbol{L}_r\boldsymbol{R}_r^*$ and $\boldsymbol{L}_r= [T^{(1)}(\boldA)\boldb, \dots, T^{(r)}(\boldA)\boldb ]$ and $\boldsymbol{R}_r=[S^{(1)}(\boldB^*)\boldc, \dots, S^{(r)}(\boldB^*)\boldc ]$. Thus it is sufficient to show that $\Qr \tilde{\boldsymbol{L}}_r=\boldsymbol{L}_r$ and that $\Ur \tilde{\boldsymbol{R}}_r=\boldsymbol{R}_r$.  Without loss of generality consider just the former equation.  This, in turn, amounts to showing that $\Qr T^{(i)}(\boldAr)\boldbr=T^{(i)}(\boldA)\boldb$.  If $T_i(\boldA)\boldb$ are a set of orthogonal rational functions that span  $\mathcal{K}^{\text{rat}}_{r}(\boldA,\boldb,\boldsymbol{\sigma})$, then it is sufficient to show that 
\begin{equation}\label{finalshow}
\Qr T_i(\boldAr)\boldbr=T_i(\boldA)\boldb.
\end{equation}
Equality (\ref{finalshow}) follows readily from the interpolation properties of the Galerkin projection, which we show below.   First, note that due to the interpolation properties of the Galerkin projection, $\Qr(\sigma_i\boldI_r-\boldAr)^{-1}\boldbr=(\sigma_i\boldI-\boldA)^{-1}\boldb$.
Let $\boldVr=[(\sigma_1\boldI-\boldA)^{-1}\boldb \dots (\sigma_r\boldI-\boldA)^{-1}\boldb]$. Then,  for some $\boldsymbol{x}\in \reals^{r}$, 
\begin{align}
\boldVr\boldsymbol{x}=T_i(\boldA)\boldb
=\Qr[(\sigma_1\boldI_r-\boldAr)^{-1}\boldbr \dots (\sigma_r\boldI_r-\boldAr)^{-1}\boldbr]\boldsymbol{x}
=\Qr T_i(\boldAr)\boldbr,
\end{align}
which proves (\ref{finalshow}).

($\Rightarrow$) Let $\tilde{\boldsymbol{X}}_r$ be the solution of 
\begin{equation} \label{small_lyapunov}
\Qr^*\boldA\Qr\projX+\projX\Ur^* \boldB\Ur+ \Qr^*\boldb\boldc^*\Ur =\boldsymbol{0}
\end{equation}
where \Qr\ is an orthonormal basis for $\mathcal{K}^{\text{rat}}_{r}(\boldA, \boldb,\boldsymbol{\sigma})$ and \Ur\ is an orthonormal basis for $\mathcal{K}^{\text{rat}}_{r}(\boldB^*,\boldc,\bar{\boldsymbol{\sigma}})$. Suppose that $\Qr \tilde{\boldsymbol{X}}_r \Ur^*=\boldsymbol{X}_r$.    Let $\hat{\boldsymbol{X}}_r$ be the approximate solution of (\ref{small_lyapunov}) resulting from applying the shifts $\alpha_i = \beta_i = -\sigma_i$ for $i=1,\ldots,r$ to exactly $r$ steps of the ADI iteration (\ref{adim}).  By the interpolation result given in the proof above,  $\Qr \hat{\boldsymbol{X}}_r \Ur^*=\boldsymbol{X}_r$.  It follows from the assumptions that, $\Qr \hat{\boldsymbol{X}}_r \Ur^*=\Qr \tilde{\boldsymbol{X}}_r \Ur^*$, so $\hat{\boldsymbol{X}}_r=\tilde{\boldsymbol{X}}_r$.  But this means that $\hat{\boldsymbol{X}}_r$ solves (\ref{small_lyapunov}), and so either $\lambda(\Qr^*\boldA\Qr)=-\{\sigma_1,\dots,\sigma_r\}$ or $\lambda(\Ur^*\boldB\Ur)=-\{\sigma_1,\dots,\sigma_r\}$.
\end{proof}
\begin{remark}
This theorem shows that the ADI approximation for the Sylvester equation is equivalent to lifting 
the solution of the projected Sylvester equation back to the original dimension when
either
$ \boldsymbol{\lambda}(\Qr^*\boldA\Qr)=-\{\sigma_1,\dots,\sigma_r\}$ or $ \boldsymbol{\lambda}(\Ur^*\boldB\Ur)=-\{\sigma_1,\dots,\sigma_r\}$; hence the two most common approximation methods for solving a Sylvester equation is indeed equivalent for these special shift selection. Recalling pseudeo-$\htwo$ optimality condition (\ref{h2onesided}), for a given $r$, these special shifts are indeed exactly the pseudo-$\htwo$ optimal shifts for a dynamical system $H_1(s) = \boldsymbol{z}_1(s \boldI - \boldA)^{-1}\boldb$ or  
$H_2(s) = \boldsymbol{z}_2(s \boldI - \boldB^*)^{-1}\boldc^*$ where $\boldsymbol{z}_1$ and $\boldsymbol{z}_2$
are vectors of appropriate sizes.
\end{remark}

\subsection{Orthogonality in the case of Lyapunov equation} 
The parameters for which the ADI iteration and the rational Krylov projections coincide also satisfy orthogonality conditions on the residual for the special case of the Lyapunov equation  
\begin{equation} \label{lyapbb}
\boldA \boldsymbol{X}+\boldsymbol{X}\boldA^*+\boldb\boldb^*=\boldsymbol{0}
\end{equation} For a given approximation $\fullX_r$ to the solution $\fullX$, define the residual $\boldsymbol{R}$ as
\begin{equation}  \label{lyapres}
\boldsymbol{R}=\boldA \boldsymbol{X}_r+\boldsymbol{X}_r\boldA^*+\boldb\boldb^*.
\end{equation}
The following result was first given in \cite{druskin:1875}.  Here we present a new and more concise proof of the orthogonality result in terms of the special interpolation properties of the pseudo $\htwo$-optimal shifts.  
\begin{thm}  \label{thm:lyap}
Given $\boldA \boldsymbol{X}+\boldsymbol{X}\boldA^*+\boldb\boldb^*= \boldsymbol{0}$, let  $\tilde{\boldsymbol{X}}_r \in \reals^{r \times r}$
solve the projected Lyapunov equation 
$$
\Qr^*\boldA\Qr \projX  + \projX \Qr^* \boldA\Qr+ \Qr^*\boldb\boldb^*\Qr =\boldsymbol{0},
$$
where  \Qr\ is an orthonormal basis for the $\mathcal{K}^{\text{rat}}_{r}(\boldA,\boldb,\boldsymbol{\sigma})$ 
with $\boldsymbol{\sigma} = \{\sigma_1,\dots,\sigma_r\}$
Let $\fullX_r=\Qr\tilde{\boldsymbol{X}}_r\Qr^*$.Then $\Qr^*\boldsymbol{R}=\boldsymbol{0}$ if and only if $\boldsymbol{\lambda}(\Qr^*\boldA\Qr)=-\{{\sigma}_1,\dots,{\sigma}_r\}$ where $\boldsymbol{R}$ is the residual defined in (\ref{lyapres}).
\end{thm}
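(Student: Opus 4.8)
The plan is to reduce the orthogonality requirement $\Qr^*\boldsymbol{R}=\boldsymbol{0}$ to a single rank-one condition and then identify that condition with the pseudo-$\htwo$ spectral condition $\boldsymbol{\lambda}(\boldAr)=-\poles$. First I would left-multiply the residual $\boldsymbol{R}=\boldA\fullX_r+\fullX_r\boldA^*+\boldb\boldb^*$ by $\Qr^*$, substitute $\fullX_r=\Qr\projX\Qr^*$, and use $\Qr^*\Qr=\boldI$ with $\boldAr=\Qr^*\boldA\Qr$. Compressing once more on the right by $\Qr$ and invoking the projected Lyapunov equation shows $\Qr^*\boldsymbol{R}\Qr=\boldAr\projX+\projX\boldAr^*+\boldbr\boldbr^*=\boldsymbol{0}$ identically; this is the usual Galerkin orthogonality and tells me that $\Qr^*\boldsymbol{R}$ is supported on the complement of $\text{colspan}(\Qr)$. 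Eliminating $\boldAr\projX$ via the projected equation then yields the factorization $\Qr^*\boldsymbol{R}=\boldsymbol{M}(\boldI-\Qr\Qr^*)$ with $\boldsymbol{M}=\projX\Qr^*\boldA^*+\Qr^*\boldb\boldb^*$.

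The heart of the reduction is the rational-Krylov relation encoding the interpolation structure of the shifts. Writing $\boldVr=[(\sigma_1\boldI-\boldA)^{-1}\boldb,\dots,(\sigma_r\boldI-\boldA)^{-1}\boldb]$, each column obeys $\boldA(\sigma_i\boldI-\boldA)^{-1}\boldb=\sigma_i(\sigma_i\boldI-\boldA)^{-1}\boldb-\boldb$, so that $\boldA\boldVr=\boldVr\boldsymbol{D}_\sigma-\boldb\,\boldsymbol{1}^{\top}$ with $\boldsymbol{D}_\sigma=\mathrm{diag}(\sigma_1,\dots,\sigma_r)$. Passing to the orthonormal basis through $\Qr=\boldVr\boldsymbol{T}$ gives $\boldA\Qr=\Qr\boldsymbol{S}-\boldb\,\boldsymbol{g}^*$, where $\boldsymbol{S}=\boldsymbol{T}^{-1}\boldsymbol{D}_\sigma\boldsymbol{T}$ (hence $\boldAr=\boldsymbol{S}-\boldbr\,\boldsymbol{g}^*$ and $\boldsymbol{\lambda}(\boldsymbol{S})=\poles$) and $(\boldI-\Qr\Qr^*)\boldA\Qr=-(\boldI-\Qr\Qr^*)\boldb\,\boldsymbol{g}^*$. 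Feeding this into $\boldsymbol{M}$ collapses every term to the rank-one form $\Qr^*\boldsymbol{R}=-\boldsymbol{h}\,\boldb^*(\boldI-\Qr\Qr^*)$, where $\boldsymbol{h}=\projX\boldsymbol{g}-\boldbr$. Since generically $\boldb\notin\text{colspan}(\Qr)$, we have $\boldb^*(\boldI-\Qr\Qr^*)\neq\boldsymbol{0}$, and therefore $\Qr^*\boldsymbol{R}=\boldsymbol{0}$ holds if and only if $\projX\boldsymbol{g}=\boldbr$.

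It then remains to prove the equivalence $\projX\boldsymbol{g}=\boldbr\iff\boldsymbol{\lambda}(\boldAr)=-\poles$. The forward implication is short: $\projX\boldsymbol{g}=\boldbr$ gives $\boldsymbol{g}^*\projX=\boldbr^*$ by symmetry of $\projX$, and inserting $\boldAr=\boldsymbol{S}-\boldbr\,\boldsymbol{g}^*$ into the projected Lyapunov equation collapses it to $\boldsymbol{S}\projX+\projX\boldAr^*=\boldsymbol{0}$; with $\projX$ positive definite (hence invertible, which holds when $(\boldAr,\boldbr)$ is controllable) this makes $\boldsymbol{S}$ similar to $-\boldAr^*$, so $\boldsymbol{\lambda}(\boldsymbol{S})=-\overline{\boldsymbol{\lambda}(\boldAr)}$, and closure of $\poles$ under conjugation yields $\boldsymbol{\lambda}(\boldAr)=-\poles$.

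The converse is where I expect the main obstacle. Assuming $\boldsymbol{\lambda}(\boldAr)=-\poles$, rearranging the projected Lyapunov equation produces the Sylvester identity $\boldsymbol{S}\projX+\projX\boldAr^*=\boldbr\,\boldsymbol{h}^*$. Because $\boldsymbol{\lambda}(\boldsymbol{S})=\poles$ and $\boldsymbol{\lambda}(-\boldAr^*)=\poles$ now coincide, this Sylvester operator is singular and cannot simply be inverted; instead I would extract its Fredholm compatibility conditions by contracting with a left eigenvector $\boldsymbol{u}_k^*\boldsymbol{S}=\sigma_k\boldsymbol{u}_k^*$ and a right eigenvector $\boldAr^*\boldsymbol{w}_l=\mu_l\boldsymbol{w}_l$: whenever $\mu_l=-\sigma_k$ the left-hand side vanishes, forcing $(\boldsymbol{u}_k^*\boldbr)(\boldsymbol{h}^*\boldsymbol{w}_l)=0$. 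Since the rank-one correction $\boldbr\,\boldsymbol{g}^*$ relating $\boldsymbol{S}$ and $\boldAr$ leaves the controllability subspace unchanged, $(\boldsymbol{S},\boldbr)$ is controllable exactly when $(\boldAr,\boldbr)$ is, and the PBH test then gives $\boldsymbol{u}_k^*\boldbr\neq0$; hence $\boldsymbol{h}^*\boldsymbol{w}_l=0$ for every eigenvector. For distinct shifts the $\boldsymbol{w}_l$ form a basis, so $\boldsymbol{h}=\boldsymbol{0}$, i.e.\ $\projX\boldsymbol{g}=\boldbr$, which closes the equivalence. The delicate points to get right are the conjugation bookkeeping in pairing $\mu_l=-\sigma_k$ and the repeated-shift case, where the eigenvector argument must be replaced by generalized eigenvectors (or justified by a continuity argument in the shifts).
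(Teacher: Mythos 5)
Your proof is correct, and it takes a genuinely different route from the paper's. The paper diagonalizes $\boldAr=\boldsymbol{T}\boldsymbol{\Lambda}\boldsymbol{T}^{-1}$ and reads the (lifted or projected) Lyapunov equation column by column, the key tool being the interpolation property of the rational Krylov Galerkin projection, $\Qr(\sigma_i\boldI_r-\boldAr)^{-1}\boldbr=(\sigma_i\boldI-\boldA)^{-1}\boldb$: in the ($\Rightarrow$) direction the columns of $\Qr\projX\boldsymbol{T}^{-*}$ are identified as resolvent vectors $(-\bar{\lambda}_i\boldI-\boldA)^{-1}\boldb\zeta_i$, forcing $\mathcal{K}^{\text{rat}}_{r}(\boldA,\boldb,\poles)=\mathcal{K}^{\text{rat}}_{r}(\boldA,\boldb,-\bar{\boldsymbol{\lambda}})$ and hence $\poles=-\boldsymbol{\lambda}$, while in the ($\Leftarrow$) direction the same interpolation property lets the projected equation be lifted to the full one. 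You instead use the rational Arnoldi-type decomposition $\boldA\Qr=\Qr\boldsymbol{S}-\boldb\,\boldsymbol{g}^*$ with $\boldsymbol{\lambda}(\boldsymbol{S})=\poles$ to get the exact factorization
\[
\Qr^*\boldsymbol{R}=-\bigl(\projX\boldsymbol{g}-\boldbr\bigr)\,\boldb^*\bigl(\boldI-\Qr\Qr^*\bigr),
\]
and then trade the rank-one condition $\projX\boldsymbol{g}=\boldbr$ for the spectral condition via the similarity $\boldsymbol{S}=-\projX\boldAr^*\projX^{-1}$ in one direction and a Fredholm-compatibility/PBH argument in the other; I checked the algebra and both directions are sound. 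What your route buys: the displayed factorization is a sharper structural fact than the orthogonality statement itself (the projected residual is always rank one and supported on $\mathrm{colspan}(\Qr)^{\perp}$), and it makes visible the non-degeneracy hypotheses — controllability of $(\boldAr,\boldbr)$, $\boldb\notin\mathrm{colspan}(\Qr)$, distinct shifts — which the theorem genuinely needs: e.g.\ if $\boldb^*(s\boldI-\boldA)^{-1}\boldb$ has a zero at $s=\sigma_1$ (possible for nonnormal $\boldA$), then for $r=1$ one gets $\boldbr=\boldsymbol{0}$, hence $\projX=\boldsymbol{0}$ and $\Qr^*\boldsymbol{R}=\boldsymbol{0}$ automatically, while $\boldsymbol{\lambda}(\boldAr)\neq-\poles$; the paper's proof silently needs the same exclusions ($\zeta_i\neq 0$, $\projX$ nonsingular, $\boldAr$ diagonalizable), so your ``generically'' caveat puts you at no disadvantage in rigor. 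What the paper's route buys: once the interpolation lemma is granted, its ($\Leftarrow$) direction is essentially a two-line verification and the whole argument stays inside the interpolation framework already used for Theorem \ref{sylvester_eq}, whereas your converse (the singular Sylvester operator plus PBH) is the heaviest step and, as you note, requires extra care for repeated shifts.
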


\begin{proof}
($\Rightarrow$)  Suppose that $\Qr^*\boldsymbol{R}=\boldsymbol{0}$.  Multiplying $(\ref{lyapres})$ with $\Qr^*$ from the
left and  then transposing the resulting equation leads to
\begin{align}  \label{Aqr1}
\boldA\Qr\projX+\Qr\projX \Qr^*\boldA^*\Qr+\boldb\boldb^*\Qr&=\boldsymbol{0}.
\end{align}
Let $\boldAr=\Qr^*\boldA\Qr = \boldsymbol{T} \boldsymbol{\Lambda} \boldsymbol{T}^{-1}$
be the eigenvalue decomposition of $\boldAr$ where 
$\boldsymbol{\Lambda}= {\rm diag}({\lambda}_1,\ldots, {\lambda}_r)$.  Plug these expressions into  (\ref{Aqr1}), and right multiply by $\boldsymbol{T}^{-*}$ to obtain
\begin{align}
\Qr\projX\boldsymbol{T}^{-*} \boldsymbol{\Lambda}^*+\boldA\Qr\projX\boldsymbol{T}^{-*}+\boldb\boldb^*\Qr\boldsymbol{T}^{-*}&=\boldsymbol{0} \label{interpolation_sylvester}
\end{align}
  Let $\zeta_i$ be the $i^{\rm th}$ entry of $\boldb^*\Qr\boldsymbol{T}^{-*}$.  Then it is straightforward to show that the $i^{\rm th}$ column of $\Qr\projX\boldsymbol{T}^{-*}$ must be $(-\bar{\lambda}_i\boldI-\boldA)^{-1}\boldb\zeta_i$. Thus, it follows that $\mathcal{K}^{\text{rat}}_{r}(\boldA,\boldb,\boldsymbol{\sigma})=\mathcal{K}^{\text{rat}}_{r}(\boldA,\boldb,-\bar{\boldsymbol{\lambda}})$, where  ${\boldsymbol{\lambda}}=\{{\lambda}_1, \dots,{\lambda}_r\}$.  Since both sets $\boldsymbol{\sigma}$ and $\boldsymbol{\lambda}$ are closed under conjugation, after an appropriate reordering, we obtain $\sigma_i=-{\lambda}_i$.\\
($\Leftarrow$) Observe that
\begin{align}
\boldAr\projX+\projX\boldAr^*+\Qr^*\boldb\boldb^*\Qr&=\boldsymbol{0} \Rightarrow\\
\boldAr\projX\boldsymbol{T}^{-*}+\projX\boldsymbol{T}^{-*} \boldsymbol{\Lambda}^*+\Qr^*\boldb\boldb^*\Qr\boldsymbol{T}^{-*}&=\boldsymbol{0}.
\end{align}

Thus, the $ith$ column of $\projX\boldsymbol{T}^{-*}$ is $(-\bar{\lambda}_i\boldI_r-\boldAr)^{-1}\Qr^*\boldb\zeta_i$.  But since \Qr\ is an orthonormal basis for $\mathcal{K}^{\text{rat}}_{r}(\boldA,\tilde{\boldb},\boldsymbol{\sigma})$, and $\lambda_i=-\bar{\sigma}_i$, this means 
\begin{equation}
\Qr(-\bar{\lambda}_i\boldI_r-\boldAr)^{-1}\Qr^*\boldb\zeta_i=(-\bar{\lambda}_i\boldI-\boldA)^{-1}\boldb\zeta_i=(\Qr\projX\boldsymbol{T}^{-*}) \boldsymbol{e}_i,
\end{equation}
where $\boldsymbol{e}_i$ is the $ith$ unit vector.
Thus,
\begin{align}
\Qr\projX\boldsymbol{T}^{-*}\boldsymbol{\Lambda}^* + \boldA\Qr\projX\boldsymbol{T}^{-*} +\boldb\boldb^*\Qr\boldsymbol{T}^{-*}&=\boldsymbol{0},
\end{align}
which implies
\begin{align}
\Qr\projX\Qr^*\boldA^*\Qr+\boldA\Qr\projX+\boldb\boldb^*\Qr&=\boldsymbol{0}.
\end{align}
Transpose this last expression and use the fact that $\Qr^*\Qr = \boldI_r$ to obtain
\begin{align}
\Qr^*\Qr\projX\Qr^*\boldA^*+\Qr^*\boldA\Qr\projX\Qr^*+\Qr^*\boldb\boldb^*
=
\Qr^*\boldsymbol{R}=\boldsymbol{0},
\end{align}
which is the desired result.
\end{proof}
\begin{remark}
As we have previously noted, in almost every practical situation, one would choose a set of shifts  $\boldsymbol{\sigma}$ which is closed under conjugation. But even for the cases where this assumption on $\poles$ does not hold,  Theorem \ref{sylvester_eq} holds as is, and
 Theorem \ref{thm:lyap} applies with a slight modification.  To wit,
  $\lambda_i =  \bar{\sigma}_i$ for $i=1,\ldots,r$. 
\end{remark}
\section{A numerical study on using the pseudo-$\htwo$ optimal points as the ADI shifts}
Having shown that using the pseudo-$\htwo$ optimal points in the ADI iteration for the Sylvester equation is equivalent to applying RKPM and that the pseudo-$\htwo$ points leads to an orthogonality condition in the case of Lyapunov equation, the natural question to ask is what quality of approximation
the pseudo-$\htwo$ optimal points have  as ADI shifts.  We will briefly investigate this issue in this section. However, we emphasize that 
the purpose of our numerical results is not to advocate employing the pseudo $\mathcal{H}_2$-optimal shifts in the ADI iteration or in the RKPM.  This would be a  costly numerical method for approximating Sylvester equations since obtaining the pseudo $\mathcal{H}_2$-optimal shifts already requires solving several linear systems. Our numerical results are meant to illustrate the unique quality of these shifts compared with other choices of shifts that do not share the ADI-RKPM equivalency property.  

We used three benchmark models in our numerical simulations: The CD Player model with $n=120$,
 the EADY model with $n=598$, and the Rail Model with $n=1357$.  The first two models are described in detail in \cite{chahlaoui2005benchmark} and  the Rail model in \cite{benner2004ens}. For all three models, 
  we compute a rank $r$ approximation to the solution of the  the Lyapunov equation  (\ref{lyapbb}). 
  The exact and approximate solutions are denoted by $\boldsymbol{X}$ and $\boldsymbol{X}_r$, respectively.
  The Rail model has multiple inputs; thus for this model we only use the first column of the input matrix.  
  We use three different approximation methods for each model:
   \begin{itemize}
\item Method 1: The RKPM is applied to the a sequence of shifts that alternates between $0$ and $\infty$.  The resulting subspace is generally referred to as the extended Krylov subspace.  Its application to RKPM was first introduced by Simoncini in \cite{simoncini2008new}.  

\item Method 2:  The RKPM is applied using $r$ pseudo-$\htwo$ optimal shifts; or equivalently $r$ steps 
the ADI iteration is applied using $r$ pseudo-$\htwo$ optimal shifts. 

\item Method 3:  The $r$-steps of ADI iteration are applied where the ADI shifts are chosen
 via Penzl's heuristic method  \cite{penzl200clr}.

\end{itemize}

The quality of the resulting approximations from each method
are compared using the relative error in the $2$-norm,
 i.e. $\displaystyle \frac{\|\boldsymbol{X}-\boldsymbol{X}_r\|_2}{\|\boldsymbol{X}\|_2}$.
Figure \ref{figure:EADY_figure} shows the relative errors  for the EADY model 
as $r$ varies from $1$ to $50$
together with the minimum possible error, i.e. $\frac{\pi_{r+1}}{\pi_1}$ where 
$\pi_i$ is the $i^{\rm th}$ singular value of the true solution $\boldsymbol{X}$. Note that for a given $r$, the pseudo-$\htwo$ optimal shifts perform remarkably well, almost matching the 
best low-rank approximation given by the singular value decomposition. For a selected number of $r$ values, these numbers are also tabulated in Table \ref{table:Eady_table} further illustrating the effectiveness of the pseudo-$\htwo$ points as ADI or RKPM shifts. Similar results for the CD player model are shown in Figure \ref{figure:CD_figure} and in Table \ref{table:CD_table} and for the Rail Model in Figure \ref{figure:rail_figure} and Table \ref{table:rail_table} illustrating that  the pseudo $\mathcal{H}_2$-optimal shifts produce a nearly optimal rank $r$ approximation in several cases. Indeed, 
this phenomenon was recently explained by Breiten and Benner in \cite{breiten_ADI}, where they show that the $\mathcal{H}_2$ optimal shifts are optimal in a special energy norm related to the Lyapunov equation; for details we refer the reader to \cite{breiten_ADI}.

\begin{figure}[hhhh]
\center
\includegraphics[scale=.6]{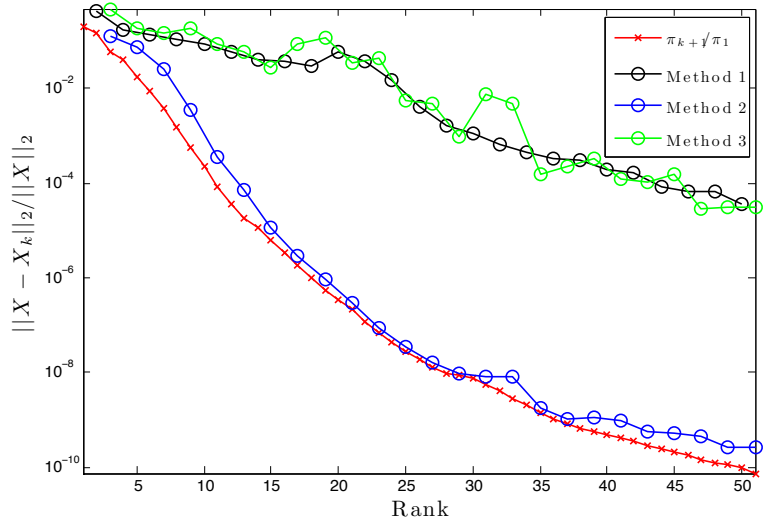}
\caption{Relative error as $r$ varies for the EADY Model }
\label{figure:EADY_figure}
\end{figure}

\begin{table}[hh]
\caption{Comparison of Methods with the EADY model order 598}
\begin{center}
\begin{tabular}{ll|lll} 
& & \multicolumn{3}{c}{$\displaystyle \frac{\|\boldsymbol{X}-\boldsymbol{X}_r\|_2}{\|\boldsymbol{X}\|_2} $}  
 \\
       \hline
  $~r$& ~~~$\frac{\pi_{r+1}}{\pi_1}$ &~Method 1&~Method 2&~Method 3\\ 
  $10$ & $2.31\times 10^{-4}$&$8.42 \times 10^{-2}$ &$1.28\times 10^{-3}$&$1.96 \times 10^{-1}$  \\
  $20$ &$3.38 \times 10^{-7}$&$5.73 \times 10^{-2}$ &$4.99 \times 10^{-7}$&$1.13 \times 10^{-1}$\\ 
  $30$ &$7.63\times 10^{-9}$&$1.09\times10^{-3}$&$8.46\times 10^{-9}$&$8.54\times10^{-2}$ \\
  $40$  &$4.83 \times 10^{-10}$ &$1.93\times 10^{-4}$ &$1.06 \times 10^{-9}$  &$9.70 \times 10^{-3}$
  \end{tabular}
\end{center}
\label{table:Eady_table}
\end{table}

\begin{figure}[hhhh]
\center
\includegraphics[scale=.5]{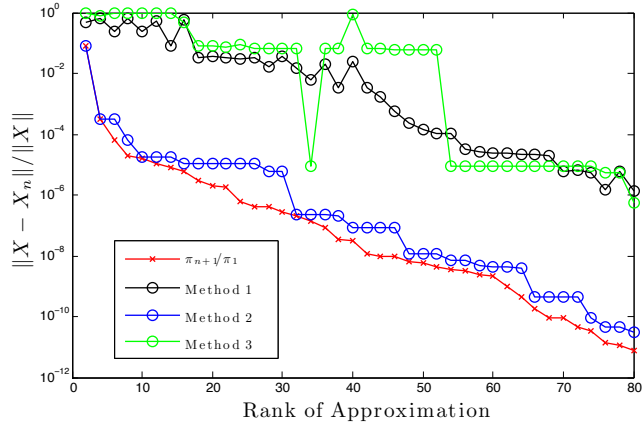}
\caption{Relative error as $r$ varies for for the CD Player model}
\label{figure:CD_figure}
\end{figure}

 \begin{table}[hh]
\caption{Comparison of  Errors for the CD Player model}
\begin{center}
\begin{tabular}{ll|lll} 
& & \multicolumn{3}{c}{$\displaystyle \frac{\|\boldsymbol{X}-\boldsymbol{X}_r\|_2}{\|\boldsymbol{X}\|_2} $}  
 \\
       \hline
  $r$& ~~~$\frac{\pi_{r+1}}{\pi_1}$& ~Method 1& ~Method 2 & ~Method 3\\ 
  $4$ & $3.20 \times 10^{-4}$&6.71$\times 10^{-1}$ &3.20$\times 10^{-4}$& 8.04$\times10^{-1}$\\
 $10$&  $1.68 \times 10^{-5}$ &$2.39 \times 10^{-1} $&$1.73 \times 10^{-5}$&9.92$\times 10^{-1}$\\
 $20$& $1.92\times10^{-6}$&$3.70\times10^{-2}$&$1.10 \times 10^{-5} $&$7.84\times10^{-2}$\\
 $40$& $3.22 \times 10^{-8}$&$1.62\times 10^{-3}$& $8.24 \times 10^{-8}$&$8.56 \times 10^{-2}$
\end{tabular}
\end{center}
\label{table:CD_table}
\end{table}

\begin{figure}[hhhh]
\begin{center}
\includegraphics[scale=.5]{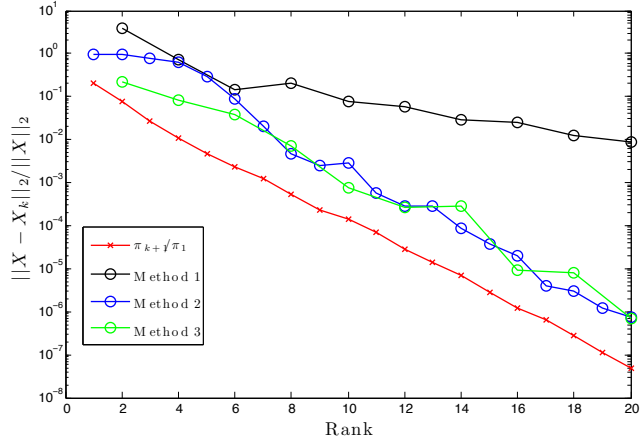}
\end{center}
\caption{Relative error as $r$ varies for for the Rail Model}
\label{figure:rail_figure}
\end{figure}

\begin{table}[hhhh]
\caption{Comparison of  Errors for  the Rail Model}
\begin{center}
\begin{tabular}{ll|lll} 
& & \multicolumn{3}{c}{$\displaystyle \frac{\|\boldsymbol{X}-\boldsymbol{X}_r\|_2}{\|\boldsymbol{X}\|_2} $}  
 \\
       \hline
  $r$ & ~~~$\frac{\pi_{r+1}}{\pi_1}$ &~Method 1&~Method 2&~Method 3\\
  $2$ & $7.47\times10^{-2}$&$3.83 \times 10^0$&$9.47\times10^{-1}$&$2.14\times10^{-1}$\\
 $6$  &$2.31\times 10^{-3}$ &$1.44 \times 10^{-1}$&$8.86 \times 10^{-2}$&3.80$\times 10^{-2}$\\
 $14$ & $6.76\times10^{-6}$&$2.88\times10^{-2}$&$8.43\times 10^{-5}$&$2.90\times10^{-4}$\\
 $20$ & $4.96 \times 10^{-8}$&$8.89\times 10^{-3}$& $7.29 \times 10^{-7}$&$6.85 \times 10^{-7}$
\end{tabular}
\end{center}
\label{table:rail_table}
\end{table}

\section{Acknowledgements}
The authors thank Prof. Christopher Beattie of Virginia Tech. for several fruitful discussions on this subject. 
This work has been supported in part by the NSF Grant DMS-0645347.
\section{Conclusions}
In this paper we presented a new result that solidifies the  connection between the ADI iteration and rational Krylov projection methods for solving large-scale Sylvester equation.  We have shown that for one-sided projections, the two methods are indeed equivalent for a special choice of shifts called pseudo-$\mathcal{H}_2$ optimal shifts, so-called because they partially satisfy first-order necessary conditions for $\mathcal{H}_2$ optimal model reduction.  These shifts are also optimal in the sense that they produce an approximation with a residual orthogonal to the rational Krylov projection subspace in the case of Lyapunov equation. 

\bibliographystyle{plainnat}
\bibliography{references}

\begin{thebibliography}{46}
\providecommand{\natexlab}[1]{#1}
\providecommand{\url}[1]{\texttt{#1}}
\expandafter\ifx\csname urlstyle\endcsname\relax
  \providecommand{\doi}[1]{doi: #1}\else
  \providecommand{\doi}{doi: \begingroup \urlstyle{rm}\Url}\fi

\bibitem[Antoulas(2005)]{AntB}
A.C. Antoulas.
\newblock \emph{{Approximation of Large-Scale Dynamical Systems (Advances in
  Design and Control)}}.
\newblock Society for Industrial and Applied Mathematics, Philadelphia, PA,
  USA, 2005.

\bibitem[Bao et~al.(2007)Bao, Lin, and Wei]{bao2007new}
L.~Bao, Y.~Lin, and Y.~Wei.
\newblock {A new projection method for solving large Sylvester equations}.
\newblock \emph{Applied numerical mathematics}, 57\penalty0 (5-7):\penalty0
  521--532, 2007.

\bibitem[Bartels and Stewart(1972)]{bartels1972solution}
R.H. Bartels and GW~Stewart.
\newblock {Algorithm 432: Solution of the matrix equation AX+ XB= C}.
\newblock \emph{Communications of the ACM}, 15\penalty0 (9):\penalty0 820--826,
  1972.

\bibitem[Beattie and Gugercin(2007)]{beattie2007kbm}
C.A. Beattie and S.~Gugercin.
\newblock {Krylov}-based minimization for optimal {$\mathcal{H}_2$} model
  reduction.
\newblock \emph{46th IEEE Conference on Decision and Control}, pages
  4385--4390, Dec. 2007.

\bibitem[Beattie and Gugercin(2009)]{beattie2009trm}
C.A. Beattie and S.~Gugercin.
\newblock A trust region method for optimal {$\mathcal{H}_2$} model reduction.
\newblock \emph{48th IEEE Conference on Decision and Control}, Dec. 2009.

\bibitem[Beckermann et~al.(2010)Beckermann, G{\"u}ttel, and
  Vandebril]{beckermann2010convergence}
B.~Beckermann, S.~G{\"u}ttel, and R.~Vandebril.
\newblock {On the convergence of rational Ritz values}.
\newblock \emph{SIAM Journal on Matrix Analysis and Applications}, 31\penalty0
  (4):\penalty0 1740--1774, 2010.

\bibitem[Benner and Saak(2004)]{benner2004ens}
P.~Benner and J.~Saak.
\newblock Efficient numerical solution of the {LQR}-problem for the heat
  equation.
\newblock \emph{Proc. Appl. Math. Mech}, 4\penalty0 (1):\penalty0 648--649,
  2004.

\bibitem[Benner et~al.(2003)Benner, Quintana-Ort{\'i}, and
  Quintana-Ort{\'i}]{benner2003statespace}
P.~Benner, E.~S. Quintana-Ort{\'i}, and G.~Quintana-Ort{\'i}.
\newblock {State-Space Truncation Methods for Parallel Model Reduction of
  Large-Scale Systems}.
\newblock \emph{Parallel Computing, special issue on ``Parallel and Distributed
  Scientific and Engineering Computing"}, 29:\penalty0 1701--1722, 2003.

\bibitem[Benner et~al.(2009)Benner, Li., and Truhar]{Benner20091035}
P.~Benner, R.-C. Li., and N~Truhar.
\newblock {On the ADI method for Sylvester equations}.
\newblock \emph{Journal of Computational and Applied Mathematics}, 233\penalty0
  (4):\penalty0 1035 -- 1045, 2009.

\bibitem[Benner and Breiten(December, 2011)]{breiten_ADI}
Peter Benner and Tobias Breiten.
\newblock On optimality of interpolation-based low-rank approximations of
  large-scale matrix equations.
\newblock \emph{Max Planck Institute Magdeburg Preprints}, December, 2011.

\bibitem[Bunse-Gerstner et~al.(2010)Bunse-Gerstner, Kubalinska, Vossen, and
  Wilczek]{bunse-gerstner2009hom}
A.~Bunse-Gerstner, D.~Kubalinska, G.~Vossen, and D.~Wilczek.
\newblock {$\mathcal{H}_2$-norm optimal model reduction for large scale
  discrete dynamical MIMO systems}.
\newblock \emph{Journal of computational and applied mathematics}, 233\penalty0
  (5):\penalty0 1202--1216, 2010.

\bibitem[Chahlaoui and Van~Dooren(2005)]{chahlaoui2005benchmark}
Y.~Chahlaoui and P.~Van~Dooren.
\newblock {Benchmark examples for model reduction of linear time-invariant
  dynamical systems}.
\newblock \emph{Dimension Reduction of Large-Scale Systems}, 45:\penalty0
  381--395, 2005.

\bibitem[Druskin et~al.(2011)Druskin, Knizhnerman, and Simoncini]{druskin:1875}
V.~Druskin, L.~Knizhnerman, and V.~Simoncini.
\newblock {Analysis of the Rational Krylov Subspace and ADI Methods for Solving
  the Lyapunov Equation}.
\newblock \emph{SIAM Journal on Numerical Analysis}, 49\penalty0 (5):\penalty0
  1875--1898, 2011.

\bibitem[El~Guennouni et~al.(2002)El~Guennouni, Jbilou, and
  Riquet]{el2002block}
A.~El~Guennouni, K.~Jbilou, and AJ~Riquet.
\newblock {Block Krylov subspace methods for solving large Sylvester
  equations}.
\newblock \emph{Numerical Algorithms}, 29\penalty0 (1):\penalty0 75--96, 2002.

\bibitem[Flagg(July, 2010)]{Flagg_presentation}
G.~M. Flagg.
\newblock $\mathcal{H}_2$-optimal interpolation: New properties and
  applications, July, 2010.
\newblock Talk given at the 2010 SIAM Annual Meeting, Pittsburgh (PA).

\bibitem[Gugercin(2005)]{gugercin2005irk}
S.~Gugercin.
\newblock An iterative rational {Krylov} algorithm {(IRKA)} for optimal
  $\mathcal{H}_2$ model reduction.
\newblock In \emph{Householder Symposium XVI}, Seven Springs Mountain Resort,
  PA, USA, May 2005.

\bibitem[Gugercin et~al.(2003)Gugercin, Sorensen, and
  Antoulas]{gugercin2003amodified}
S.~Gugercin, D.C. Sorensen, and A.C. Antoulas.
\newblock A modified low-rank {Smith} method for large-scale {Lyapunov}
  equations.
\newblock \emph{Numerical Algorithms}, 32\penalty0 (1):\penalty0 27--55, 2003.

\bibitem[Gugercin et~al.(2006)Gugercin, Antoulas, and Beattie]{gugercin2006rki}
S.~Gugercin, A.~Antoulas, and C.~Beattie.
\newblock {A rational Krylov iteration for optimal $\mathcal{H}_2$ model
  reduction}.
\newblock In \emph{Proceedings of MTNS}, volume 2006, 2006.

\bibitem[Gugercin et~al.(2008)Gugercin, Antoulas, and Beattie]{H2}
S.~Gugercin, A.C. Antoulas, and C.~Beattie.
\newblock $\mathcal{H}_2$ model reduction for large-scale linear dynamical
  systems.
\newblock \emph{SIAM Journal on Matrix Analysis and Applications}, 30\penalty0
  (2):\penalty0 609--638, 2008.

\bibitem[Gugercin et~al.(2011)Gugercin, Polyuga, Beattie, and van~der
  Schaft]{gugercin2011phmimo}
S.~Gugercin, R.V. Polyuga, C.A. Beattie, and A.~van~der Schaft.
\newblock {Structure-preserving tangential interpolation for model reduction of
  port-Hamiltonian systems}.
\newblock \emph{Automatica}, 2011.
\newblock Accepted to appear. Available as arXiv:1101.3485v2.

\bibitem[Heinkenschloss et~al.(2008)Heinkenschloss, Sorensen, and
  Sun]{heinkenschloss2008btm}
M.~Heinkenschloss, D.C. Sorensen, and K.~Sun.
\newblock {Balanced Truncation Model Reduction for a Class of Descriptor
  Systems with Application to the Oseen Equations}.
\newblock \emph{SIAM Journal on Scientific Computing}, 30:\penalty0 1038, 2008.

\bibitem[Hu and Reichel(1992)]{hu1992krylov}
D.Y. Hu and L.~Reichel.
\newblock {Krylov-subspace methods for the Sylvester equation}.
\newblock \emph{Linear Algebra and its Applications}, 172:\penalty0 283--313,
  1992.

\bibitem[Hyland and Bernstein(1985)]{hyland1985theoptimal}
D.~Hyland and D.~Bernstein.
\newblock The optimal projection equations for model reduction and the
  relationships among the methods of {Wilson}, {Skelton}, and {Moore}.
\newblock \emph{IEEE Trans.\ Automatic Control}, 30\penalty0 (12):\penalty0
  1201--1211, 1985.

\bibitem[Jaimoukha and Kasenally(1994)]{jaimoukha1994krylov}
I.M. Jaimoukha and E.M. Kasenally.
\newblock {Krylov subspace methods for solving large Lyapunov equations}.
\newblock \emph{SIAM Journal on Numerical Analysis}, pages 227--251, 1994.

\bibitem[Jbilou(2006)]{jbilou2006low}
K.~Jbilou.
\newblock {Low rank approximate solutions to large Sylvester matrix equations}.
\newblock \emph{Applied mathematics and computation}, 177\penalty0
  (1):\penalty0 365--376, 2006.

\bibitem[Li and White(2002)]{li2002low}
J.R. Li and J.~White.
\newblock {Low Rank Solution of Lyapunov Equations}.
\newblock \emph{SIAM Journal on Matrix Analysis and Applications}, 24\penalty0
  (1):\penalty0 260--280, 2002.

\bibitem[Li and White(2004)]{li2004low}
J.R. Li and J.~White.
\newblock {Low-rank solution of Lyapunov equations}.
\newblock \emph{SIAM review}, pages 693--713, 2004.

\bibitem[Meier~III and Luenberger(1967)]{meieriii1967approximation}
L.~Meier~III and D.~Luenberger.
\newblock Approximation of linear constant systems.
\newblock \emph{Automatic Control, IEEE Transactions on}, 12\penalty0
  (5):\penalty0 585--588, 1967.

\bibitem[Peaceman and Rachford(1955)]{peaceman1955numerical}
D.W. Peaceman and HH~Rachford.
\newblock The numerical solution of parabolic and elliptic differential
  equations.
\newblock \emph{Journal of the Society for Industrial and Applied Mathematics},
  3\penalty0 (1):\penalty0 28--41, 1955.

\bibitem[Penzl(2000)]{penzl200clr}
T.~Penzl.
\newblock {A cyclic low rank Smith method for large sparse Lyapunov equations}.
\newblock \emph{SIAM Journal on Scientific Comput}, 21\penalty0 (4):\penalty0
  1401--1418, 2000.

\bibitem[Sabino(2007)]{sabino2007solution}
J.~Sabino.
\newblock \emph{{Solution of large-scale Lyapunov equations via the block
  modified Smith method}}.
\newblock PhD thesis, RICE UNIVERSITY, 2007.

\bibitem[Simoncini(2008)]{simoncini2008new}
V.~Simoncini.
\newblock {A new iterative method for solving large-scale Lyapunov matrix
  equations}.
\newblock \emph{SIAM Journal on Scientific Computing}, 29\penalty0
  (3):\penalty0 1268--1288, 2008.

\bibitem[Sorensen and Antoulas(2002)]{sorensen2002thesylvester}
D.C. Sorensen and A.C. Antoulas.
\newblock The {Sylvester} equation and approximate balanced reduction.
\newblock \emph{Linear algebra and its applications}, 351:\penalty0 671--700,
  2002.

\bibitem[Spanos et~al.(1992)Spanos, Milman, and
  Mingori]{spanos1992anewalgorithm}
J.T. Spanos, M.H. Milman, and D.L. Mingori.
\newblock A new algorithm for {$L^2$} optimal model reduction.
\newblock \emph{Automatica}, 28\penalty0 (5):\penalty0 897--909, 1992.

\bibitem[Starke(1991)]{starke1991optimal}
G.~Starke.
\newblock Optimal alternating direction implicit parameters for nonsymmetric
  systems of linear equations.
\newblock \emph{SIAM journal on numerical analysis}, pages 1431--1445, 1991.

\bibitem[Starke(1993)]{Starke}
G.~Starke.
\newblock {Fejer-Walsh points for rational functions and their use in the ADI
  iterative method}.
\newblock \emph{Journal of Computational and Applied Mathematics}, 46\penalty0
  (1-2):\penalty0 129--141, 1993.

\bibitem[Stykel(2004)]{stykel2004gbm}
T.~Stykel.
\newblock {Gramian-Based Model Reduction for Descriptor Systems}.
\newblock \emph{Mathematics of Control, Signals, and Systems (MCSS)},
  16\penalty0 (4):\penalty0 297--319, 2004.

\bibitem[Stykel and Simoncini(2011)]{stykel2011krylov}
T.~Stykel and V.~Simoncini.
\newblock {Krylov subspace methods for projected Lyapunov equations}.
\newblock \emph{Applied Numerical Mathematics}, 2011.

\bibitem[Truhar and Li(2007)]{truhar2007adi}
N.~Truhar and R.C. Li.
\newblock {On the ADI Method for Sylvester Equations}.
\newblock Technical report, Technical Report 2008-02, Department of
  Mathematics, University of Texas at Arlington, 2008, available at http://www.
  uta. edu/math/preprint/rep2008 02. pdf, 2007.

\bibitem[Tyrtyshnikov(1996)]{tyrtyshnikov1996mosaic}
E.~Tyrtyshnikov.
\newblock Mosaic-skeleton approximations.
\newblock \emph{Calcolo}, 33\penalty0 (1):\penalty0 47--57, 1996.

\bibitem[Van~Dooren et~al.(2008)Van~Dooren, Gallivan, and
  Absil]{vandooren2008hom}
P.~Van~Dooren, K.A. Gallivan, and P.A. Absil.
\newblock {$\mathcal{H}_2$-optimal model reduction of MIMO systems}.
\newblock \emph{Applied Mathematics Letters}, 21\penalty0 (12):\penalty0
  1267--1273, 2008.

\bibitem[Wachspress(1963)]{wachspress1963extended}
E.L. Wachspress.
\newblock {Extended application of alternating direction implicit iteration
  model problem theory}.
\newblock \emph{Journal of the Society for Industrial and Applied Mathematics},
  11\penalty0 (4):\penalty0 994--1016, 1963.

\bibitem[Wachspress(1988)]{wachspress1988adi}
E.L. Wachspress.
\newblock {The ADI minimax problem for complex spectra}.
\newblock \emph{Applied Mathematics Letters}, 1\penalty0 (3):\penalty0
  311--314, 1988.

\bibitem[Wachspress(2008)]{wachspress2008trail}
E.L. Wachspress.
\newblock {Trail to a Lyapunov equation solver}.
\newblock \emph{Computers \& Mathematics with Applications}, 55\penalty0
  (8):\penalty0 1653--1659, 2008.

\bibitem[Wilson(1970)]{wilson1970optimum}
D.A. Wilson.
\newblock Optimum solution of model-reduction problem.
\newblock \emph{Proc. IEE}, 117\penalty0 (6):\penalty0 1161--1165, 1970.

\bibitem[Zigic et~al.(1993)Zigic, Watson, and Beattie]{zigic1993contragredient}
D.~Zigic, L.T. Watson, and C.A. Beattie.
\newblock Contragredient transformations applied to the optimal projection
  equations.
\newblock \emph{Linear algebra and its applications}, 188:\penalty0 665--676,
  1993.

\end{thebibliography}

\end{document}